\newtheorem{theorem}{Theorem}[section]
\newtheorem{corollary}[theorem] {Corollary}
\newtheorem{lemma} [theorem]{Lemma}
\begin{document}
  
  \label{'ubf'}  
\setcounter{page}{1}

\markboth {\hspace*{-9mm} \centerline{\footnotesize \sc
   Decomposition of hypercubes into sunlet graphs}
                 }
                { \centerline                           {\footnotesize \sc  
         A.V. Sonawane                                        } \hspace*{-9mm}              
               }
\begin{center}
{ 
       {\Large \textbf { \sc  Decomposition of hypercubes\\ into sunlet graphs}
       }
\\
\medskip
{\sc A.V. Sonawane}\\
{\footnotesize Government of Maharashtra's Ismail Yusuf College of Arts, Science and Commerce,\\ Mumbai 400 060, INDIA.}\\

{\footnotesize e-mail: {\it amolvson@gmail.com}}
}
\end{center}

\thispagestyle{empty}


\begin{abstract}  
{\footnotesize  For any positive integer $k \geq 3,$ the sunlet graph of order $2k$, denoted by $L_{2k},$ is the graph obtained by adding a pendant edge to each vertex of a cycle of length $k.$ In this paper, we prove that the necessary and sufficient condition for the existence of an $L_{16}$-decomposition of the $n$-dimensional hypercube $Q_n$ is $n = 4$ or $n \geq 6.$ Also, we prove that for any integer $m \geq 2,$ $Q_{mn}$ has an $L_{2k}$-decomposition if $Q_{n}$ has a $C_k$-decomposition.

\vspace{0.3cm}
{\small \textbf{Keywords:} decomposition, hypercube, sunlet graph}

\indent {\small {\bf 2020 Mathematics Subject Classification:} 05C51}
}
\end{abstract}

\section{Introduction}
All graphs under consideration are simple and finite. For any positive integer $n,$ the {\it hypercube} of dimension $n,$ denoted by $Q_n,$ is a graph with vertex set $\{x_1 x_2 \cdots x_n : x_i =$ $0$ or $1$ for $i = 1, 2, \cdots, n \}$ and any two vertices are adjacent in $Q_n$ if and only if they differ at exactly one position. The {\it Cartesian product} of graphs $G$ and $H,$ denoted by $G \Box H,$ is a graph with vertex set $V(G) \times V(H),$ and two vertices $(x,y)$ and $(u,v)$ are adjacent in $G \Box H$ if and only if either $x=u$ and $y$ is adjacent to $v$ in $H,$ or $x$ is adjacent to $u$ in $G$ and $y=v.$ It is well-known that $Q_n$ is the Cartesian product of $n$ copies of the complete graph $K_2.$ Note that $Q_n$ is an $n$-regular and $n$-connected graph with $2^n$ vertices and $n 2^{n-1}$ edges.

Let $k \geq 3$ be an integer. A cycle of length $k$ is denoted by $C_k.$ The {\it sunlet graph} of order $2k,$ denoted by $L_{2k},$ is obtained by adding a pendant edge to each vertex of the cycle $C_k$ \cite{a}. Note that $L_{2k}$ has $2k$ vertices and $2k$ edges. The sunlet graph of order sixteen $L_{16}$ is shown in Figure 1.

\vspace{0.2cm}
\begin{center}
    \scalebox{0.5}{
        \begin{tikzpicture}
        \draw [fill=black] (0,0) circle (0.1);
        \draw [fill=black] (1,0) circle (0.1);
        \draw [fill=black] (2,1) circle (0.1);
        \draw [fill=black] (2,2) circle (0.1);
        \draw [fill=black] (1,3) circle (0.1);
        \draw [fill=black] (0,3) circle (0.1);
        \draw [fill=black] (-1,2) circle (0.1);
        \draw [fill=black] (-1,1) circle (0.1);
        
        \draw [fill=black] (-0.5,-1) circle (0.1);
        \draw [fill=black] (1.5,-1) circle (0.1);
        \draw [fill=black] (3,0.5) circle (0.1);
        \draw [fill=black] (3,2.5) circle (0.1);
        \draw [fill=black] (1.5,4) circle (0.1);
        \draw [fill=black] (-0.5,4) circle (0.1);
        \draw [fill=black] (-2,2.5) circle (0.1);
        \draw [fill=black] (-2,0.5) circle (0.1);
        
        \draw (0,0)--(1,0)--(2,1)--(2,2)--(1,3)--(0,3)--(-1,2)--(-1,1)--(0,0) (0,0)--(-0.5,-1) (1,0)--(1.5,-1) (2,1)--(3,0.5) (2,2)--(3,2.5) (1,3)--(1.5,4) (0,3)--(-0.5,4) (-1,2)-- (-2,2.5) (-1,1)--(-2,0.5);
        
        \node at (0.5,-2) {\Large Figure 1. The sunlet graph $L_{16}$};
        \end{tikzpicture}}
\end{center}

A {\it decomposition} of a graph $G$ is a collection of edge-disjoint subgraphs of $G$ such that the edge set of the subgraphs partitions the edge set of $G.$ For a given graph $H,$ an {\it $H$-decomposition} of  $G$ is a decomposition into subgraphs each isomorphic to $H.$ 

The problem of decomposing the given graph into the sunlet graphs is studied for various classes of regular graphs in the literature \cite{a, ani, c, f, s, m}. Fu et al. \cite{f} proved that if $k = 6,10,14$ or $2^m ~(m \geq 2),$ then there exists an $L_{2k}$-decomposition of $K_n$ if and only if $n \geq 2k$ and $n(n-1) \equiv 0 (\text{mod}~ 4k).$ The existence of an $L_{10}$-decomposition of the complete graph $K_n$ for $n \equiv 0, 1, 5, 16 ({\text{mod}}~20)$ is guaranteed by Fu, Huang and Lin \cite{c}. Anitha and Lekshmi \cite{ani} established that the complete graph $K_{2n},$ the complete bipartite graph $K_{2n,2n}$ and the Harary graph $H_{4, 2n}$ have $L_{2n}$-decompositions for all $n \geq 3.$ Akwu and Ajayi \cite{a} proved that for even $m \geq 2,$ odd $n \geq 3$ and odd prime $p,$ the lexicographic product of $K_n$ and the graph $\Bar{K}_m$ consisting of only $m$ isolated vertices has an $L_{2p}$-decomposition if and only if $\frac{1}{2} n (n-1) m^2 \equiv 0 (\text{mod}~ 2p).$ Sowndhariya and Muthusamy \cite{sm} gave necessary and sufficient conditions for the existence of an $L_8$-decomposition
of tensor product and wreath product of complete graphs. Sowndhariya and Muthusamy \cite{m} studied an $L_8$-decomposition of the graph $K_n \Box K_m$ and proved that such a decomposition exists if and only if $n$ and $m$ satisfy one of the specific eight conditions. Sonawane and Borse \cite{s} proved that the $n$-dimensional hypercube $Q_n$ has an $L_8$-decomposition if and only if $n$ is 4 or $n \geq 6.$

In this paper, we consider the problem of decomposing the hypercube $Q_n$ into the sunlet graphs. In Section 2, we prove that the necessary and sufficient condition for the existence of an $L_{16}$-decomposition of $Q_n$ is $n = 4$ or $n \geq 6.$ In Section 3, we prove that if $Q_{n}$ has a $C_k$-decomposition, then $Q_{mn}$ has an $L_{2k}$-decomposition for $m \geq 2.$

\section{An $L_{16}$-decomposition of hypercubes}

In this section, we prove that the necessary and sufficient condition for the existence of an $L_{16}$-decomposition of $Q_n$ is $n = 4$ or $n \geq 6.$

We need a corollary of the following result due to El-Zanati and Eynden \cite{z}. They considered the cycle decomposition of the Cartesian product of cycles each of length power of $2$ and obtained the result, which is stated below.

\begin{theorem}
Let $n, k_1, k_2, \cdots, k_n \geq 2$ be integers and let $G$ be the Cartesian product of the cycles $C_{2^{k_1}}, C_{2^{k_2}}, \cdots C_{2^{k_n}}.$ Then there exists a $C_s$-decomposition of $G$ if and only if $s = 2^t$ with $2 \leq t \leq  k_1 + k_2 + · · · + k_n.$
\end{theorem}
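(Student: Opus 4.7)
My plan is to split the theorem into its two implications and treat each with different methods.

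For necessity, I would first observe that each factor $C_{2^{k_i}}$ is an even cycle (since $k_i \geq 2$), hence bipartite, so the Cartesian product $G$ is bipartite and every cycle of $G$ has even length. This already forces $2 \mid s$. Writing $K = k_1 + \cdots + k_n$, the bound $s \leq |V(G)| = 2^K$ is immediate. To rule out cycle lengths that are even but not powers of $2$, I would exploit the abelian $2$-group structure of the vertex set $V(G) = \mathbb{Z}_{2^{k_1}} \times \cdots \times \mathbb{Z}_{2^{k_n}}$: viewing $G$ as a Cayley graph with generators $\pm e_i$, each cycle corresponds to a closed word in these signed generators, and the $2$-adic orders of the factor groups constrain the lengths of closed words through relations of the form $m\cdot e_i \equiv 0 \pmod{2^{k_i}}$. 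Combining this with an edge-counting argument applied to a hypothetical $C_s$-decomposition (tracking how many edges of each type appear in cycles with each combination of signed totals per coordinate) should exhibit a $2$-adic obstruction forcing $s$ itself to be a pure power of $2$.

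For sufficiency, I would proceed by induction on the number of factors $n$. The base case $n = 2$ is the toroidal-grid case $C_{2^{k_1}} \Box C_{2^{k_2}}$, for which a $C_4$-decomposition comes from alternating $2\times 2$ cells, a Hamiltonian $C_{2^{k_1+k_2}}$-decomposition comes from standard zig-zag Hamiltonian pairs, and intermediate $C_{2^t}$-decompositions are built by concatenating shorter cycles along rows or columns. For the inductive step, write $G = G' \Box C_{2^{k_n}}$ with $G' = C_{2^{k_1}} \Box \cdots \Box C_{2^{k_{n-1}}}$, and given $t$ with $2 \leq t \leq K$: when $t \geq 3$, take a $C_{2^{t-1}}$-decomposition of $G'$ supplied by the inductive hypothesis, and for each cycle $D$ in it apply the base case to the tube $D \Box C_{2^{k_n}}$ to produce $C_{2^t}$-cycles there; the vertical edges of $G$ must then be apportioned carefully among the tubes (rather than naively duplicated) so that each is used exactly once, which is where the recursive bookkeeping becomes delicate. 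The smallest case $t = 2$ is handled directly by a $C_4$-decomposition of $G$ using any two factors.

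The principal obstacle is the necessity direction: isolating the $2$-adic invariant that forces $s$ to be a pure power of $2$ requires a genuine structural insight into the Cartesian product of $2$-power cycles and goes beyond bipartiteness and divisibility. By contrast, the sufficiency side follows a clear recursive scheme once the base case is in hand; its main difficulty is combinatorial bookkeeping to guarantee edge-disjointness when the vertical edges are distributed across tubes.
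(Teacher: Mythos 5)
This theorem is not proved in the paper at all: it is quoted from El-Zanati and Vanden Eynden \cite{z} as a known result, so there is no in-paper argument to compare yours against; judged on its own terms, your proposal has genuine gaps in both directions. For necessity, bipartiteness and the bound $s \leq 2^{K}$ (with $K = k_1 + \cdots + k_n$) only give that $s$ is even and at most $2^{K}$; they do not rule out, say, $s = 6$ in $C_4 \Box C_4 \Box C_4$, where $6$-cycles genuinely exist (e.g.\ the cycle with steps $+e_1, +e_2, +e_3, -e_1, -e_2, -e_3$) and $6$ divides the edge count $192.$ The local constraints you invoke --- each cycle uses an evenly many edges in each coordinate direction, with signed total divisible by $2^{k_i}$ --- are all satisfied by that $6$-cycle, so the entire burden of necessity rests on the ``$2$-adic obstruction'' that you say \emph{should} emerge from the edge count. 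You never exhibit that invariant, and exhibiting it is precisely the hard content of the theorem; as written, the necessity direction is an announcement of a proof, not a proof.

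The sufficiency direction also breaks where you flag it. If $\{D_1, \dots, D_r\}$ is a $C_{2^{t-1}}$-decomposition of $G' = C_{2^{k_1}} \Box \cdots \Box C_{2^{k_{n-1}}},$ then a vertical edge $\{(v,y),(v,y')\}$ of $G = G' \Box C_{2^{k_n}}$ lies in the tube $D_j \Box C_{2^{k_n}}$ for every $j$ with $v \in V(D_j),$ and since $G'$ is $2(n-1)$-regular each vertex lies on exactly $n-1$ of the cycles $D_j$; hence every vertical edge is covered $n-1$ times. The ``apportionment'' you defer is therefore the whole construction: once a tube is denied some of its vertical edges it is no longer a product of two cycles, so the base case no longer applies to it, and whatever remains cannot be salvaged by ``concatenating shorter cycles,'' since a single cycle admits no decomposition into shorter cycles. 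The workable route (and, in effect, what \cite{z} does) is to strengthen the induction to cycle \emph{factorizations} --- decompositions into spanning $2$-regular subgraphs each consisting of equal-length cycles --- prove a product lemma at that level, and only then pass to decompositions; I would reformulate both your base case and your inductive step in those terms.
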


The following result is a corollary of the above theorem as $Q_n$ is the Cartesian product of $\frac{n}{2}$ cycles of length $4$ for any even integer $n \geq 2.$

\begin{corollary} \label{C}
For any even integer $n \geq 2,$ there exists a $C_s$-decomposition of $Q_n$ if and only if $s = 2^t$ with $2 \leq t \leq 2^n.$
\end{corollary}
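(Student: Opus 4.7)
The plan is to apply Theorem 2.1 directly after re-expressing the hypercube $Q_n$ as a Cartesian product of $4$-cycles. Since Cartesian product of graphs is associative and commutative up to isomorphism, and $K_2\,\Box\,K_2 \cong C_4$, for any even integer $n \geq 2$ I can pair up the $n$ factors of $K_2$ in $Q_n = K_2\,\Box\,K_2\,\Box\,\cdots\,\Box\,K_2$ to obtain $Q_n \cong C_4\,\Box\,C_4\,\Box\,\cdots\,\Box\,C_4$ with exactly $n/2$ factors. Each factor is $C_{2^{k_i}}$ with $k_i = 2$.

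With this identification in hand, I would invoke Theorem 2.1 using the number of cycle factors equal to $n/2$ and each $k_i = 2$. The theorem says a $C_s$-decomposition of the product exists if and only if $s = 2^t$ with $2 \leq t \leq k_1 + k_2 + \cdots + k_{n/2}$, and this sum equals $2 + 2 + \cdots + 2 = n$. Hence the corollary follows, giving existence precisely when $s$ is a power of $2$ of the form $2^t$ within the stated range.

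Essentially no obstacle is involved; the proof is a single-paragraph reduction. The only thing to check carefully is that $Q_n$ with even $n$ genuinely decomposes as a Cartesian product of $n/2$ copies of $C_4$, which is a standard fact that follows from the definition of $Q_n$ as an $n$-fold Cartesian power of $K_2$ together with the isomorphism $K_2\,\Box\,K_2 \cong C_4$. Once that is recorded, Theorem 2.1 is applied verbatim, and no further combinatorial work is needed.
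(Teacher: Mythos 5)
Your proposal is correct and is exactly the paper's argument: the paper likewise justifies the corollary in one line by writing $Q_n$ as the Cartesian product of $n/2$ copies of $C_4=C_{2^2}$ and invoking Theorem 2.1. One caution: your reduction yields the range $2\leq t\leq n$ (equivalently $s\leq 2^n$), not the range $2\leq t\leq 2^n$ printed in the corollary, which is evidently a typo in the statement; you should state the bound you actually derived rather than deferring to ``the stated range.''
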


In the next lemma, we prove that the necessary condition for the existence of an $L_{16}$-decomposition of $Q_n$ is $n = 4$ or $n \geq 6.$
\begin{lemma} \label{5}
There does not exist an $L_{16}$-decomposition of $Q_n$ if $n \in \{1,2,3,5\}.$
\end{lemma}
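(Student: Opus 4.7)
The plan is to split the argument into two regimes by the number of edges of $Q_n$.

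For $n\in\{1,2,3\}$, the obstruction is just edge count. Since $|E(Q_n)| = n\cdot 2^{n-1}$, we get $1, 4, 12$ edges respectively, each of which is strictly less than $|E(L_{16})|=16$. So no copy of $L_{16}$ fits inside $Q_n$ in these cases, and the lemma is immediate.

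The interesting case is $n=5$. Here $|E(Q_5)|=80=5\cdot 16$, so an $L_{16}$-decomposition, if it existed, would consist of exactly five copies. I would rule it out by a degree-parity/counting argument based on the degree sequence of $L_{16}$: every vertex of $L_{16}$ has degree either $3$ (the eight cycle vertices) or $1$ (the eight pendants). For each vertex $v$ of $Q_5$ and each copy $L$ in a hypothetical decomposition $\mathcal D$, the degree $\deg_L(v)$ is $0$, $1$, or $3$; and because $Q_5$ is $5$-regular, summing over $L\in\mathcal D$ gives $\sum_{L\in\mathcal D}\deg_L(v)=5$. Since $3+3>5$, each vertex $v$ can appear as a cycle vertex in at most one copy; writing $c_3(v)$ for the number of copies in which $v$ is a cycle vertex, we get $c_3(v)\in\{0,1\}$.

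The contradiction then comes from double counting the cycle-vertex slots. On one hand, $\sum_{v\in V(Q_5)} c_3(v)\le |V(Q_5)|=32$. On the other hand, each of the five copies of $L_{16}$ contributes exactly eight cycle vertices, so the same sum equals $5\cdot 8=40$. Since $40>32$, no such decomposition can exist, completing the proof for $n=5$.

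I do not anticipate a genuine obstacle here; the $n\le 3$ cases are handled by inspection and the $n=5$ case by the clean degree-sequence count above. The only thing to be careful about is phrasing the degree argument so that it explicitly uses both facts about $L_{16}$, namely that its maximum degree is $3$ and that its vertex set is split evenly ($8$ and $8$) between degree-$3$ and degree-$1$ vertices; both are needed to make the inequality $40>32$ go through.
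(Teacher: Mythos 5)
Your proof is correct and follows essentially the same route as the paper: rule out $n\le 3$ by edge count, then for $n=5$ observe that since $3+3>5$ no vertex of the $5$-regular $Q_5$ can be a degree-$3$ vertex in two copies, and double-count the $5\cdot 8=40$ cycle-vertex slots against the $32$ vertices of $Q_5$. Your write-up merely makes the counting slightly more explicit than the paper's.
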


\begin{proof} 
Contrary assume that $Q_n$ has an $L_{16}$-decomposition for some $n \in \{1,2,3,5\}.$ Then the number of edges of $L_{16}$ must divide the number of edges of $Q_n.$ Hence $16$ divides $n 2^{n-1}.$ This shows that $n \geq 4$ and so, $n=5.$ Since $Q_5$ has $80$ edges, there are five copies of the graph $L_{16}$ in the $L_{16}$-decomposition of $Q_5.$ Every vertex of $Q_5$ has degree $5$ whereas $L_{16}$ has eight vertices of degree 3 and eight of degree 1. Therefore, a degree 3 vertex of any copy of $L_{16}$ in the decomposition cannot be a degree 3 vertex of another copy of $L_{16}.$ This implies that $Q_5$ has at least 40 vertices, a contradiction.
\end{proof}

In the next lemma, we give decomposition of $C_k \Box C_k$ into spanning sunlet subgraphs for any even integer $k \geq 4.$
\begin{lemma} \label{2}
For any even integer $k \geq 4,$ the graph $C_k \Box C_k$ has an $L_{k^2}$-decomposition.
\end{lemma}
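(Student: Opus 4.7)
The plan is to exhibit a partition $V(C_k \Box C_k) = A \cup B$ with $|A| = |B| = k^2/2$ such that the induced subgraphs on $A$ and on $B$ are each a single cycle of length $k^2/2$. The remaining edges between $A$ and $B$ then form a $2$-regular bipartite subgraph, which decomposes into two perfect matchings $M_1, M_2$; attaching $M_1$ as pendants to the cycle on $A$ and $M_2$ as pendants to the cycle on $B$ yields the two required copies of $L_{k^2}$.

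Label the vertices of $C_k \Box C_k$ by pairs $(i,j) \in \mathbb{Z}_k \times \mathbb{Z}_k$. For each column $j$, set $a_j = j(\frac{k}{2}-1) \bmod k$ and let $W_j = \{a_j, a_j+1, \dots, a_j + \frac{k}{2}-1\}$ be the cyclic interval of length $\frac{k}{2}$ in $\mathbb{Z}_k$ starting at $a_j$. Define the ``staircase'' $A = \{(i,j) : i \in W_j\}$ and $B = V \setminus A$; clearly $|A| = k \cdot \frac{k}{2} = k^2/2$. The key observation is that the shift between $W_j$ and $W_{j+1}$ is $\frac{k}{2}-1$, exactly one less than the window size, and hence $W_j \cap W_{j+1} = \{a_j + \frac{k}{2} - 1\} = \{a_{j+1}\}$ is a single element.

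From this overlap a routine check gives that every vertex of $A$ has exactly two $A$-neighbours: an interior vertex of $W_j$ has both of its horizontal neighbours in $A$ and no vertical one, while each of the two boundary vertices $(a_j, j)$ and $(a_j + \frac{k}{2}-1, j)$ has one horizontal and one vertical $A$-neighbour. Thus the induced subgraph on $A$ is $2$-regular, and tracing the unique walk---horizontally through column $j$ from $(a_j, j)$ to $(a_j + \frac{k}{2}-1, j)$ and then vertically up to $(a_{j+1}, j+1)$---shows that it closes only after all $k$ columns have been traversed, since returning to the start $(0,0)$ requires returning to column $0$. This visits all $k \cdot \frac{k}{2} = k^2/2$ vertices of $A$, so the induced subgraph on $A$ is a single cycle of length $k^2/2$. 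The set $B$ has the same shifting-window structure (with starting points $a_j + \frac{k}{2}$), so by the identical argument the induced subgraph on $B$ is also a $C_{k^2/2}$.

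The remaining edges of $C_k \Box C_k$ form the bipartite graph between $A$ and $B$; since each vertex has degree $4$ in $C_k \Box C_k$ and two same-side neighbours, it has exactly two opposite-side neighbours, so this bipartite graph is $2$-regular. A $2$-regular bipartite graph is a disjoint union of even cycles and therefore splits into two perfect matchings $M_1, M_2$. Hence $G[A] \cup M_1$ and $G[B] \cup M_2$ are two edge-disjoint spanning copies of $L_{k^2}$ whose union is all of $E(C_k \Box C_k)$. The main obstacle I anticipate is ensuring that the staircase produces a \emph{single} cycle on $A$ rather than a disjoint union of shorter cycles; this is precisely why the shift is chosen to be $\frac{k}{2}-1$ (so that consecutive windows meet in exactly one vertex, forcing $2$-regularity) and why the walk cannot close before visiting all $k$ columns. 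Once this structural point is verified, the matching decomposition of the bipartite piece and the identification of each half with $L_{k^2}$ are immediate.
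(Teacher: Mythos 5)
Your construction is correct and is essentially the paper's: both build two vertex-disjoint ``staircase'' cycles of length $k^2/2$ (cyclic windows of length $\frac{k}{2}$ shifted by $\frac{k}{2}-1$ from one column to the next) and hang the remaining edges as pendants. The one point where you genuinely improve on the paper is the pendant step: the paper attaches pendants ``in the lexicographic order as per the availability of the vertex'' without justification, whereas your observation that the leftover $A$--$B$ edges form a $2$-regular bipartite graph and hence split into two perfect matchings, each saturating all of $A$ and all of $B$, makes that step airtight.
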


\begin{proof}
Let $V(C_k) = \mathbb{Z}_k$ such that a vertex $i$ is adjacent to a vertex $i+1\pmod{k}.$ Then $V(C_k \Box C_k) = \{(i,j) : i,j = 1,2,\cdots,k\}.$ We construct two vertex-disjoint cycles $Z_1$ and $Z_2$ of length $\frac{k^2}{2}$ in $C_k \Box C_k$ as $Z_1=\langle (1,1),(1,2),\cdots,(1,\frac{k}{2}),(2,\frac{k}{2}),(2,\frac{k}{2}+1),\cdots,(2,k-1),(3,k-1),(3,k),(3,1),\cdots,(3,\frac{k}{2}-2),\cdots,(k,1) \rangle$ and $Z_2= \langle (1,\frac{k}{2}+1),(1,\frac{k}{2}+2),\cdots,(1,k),\\(2,k),(2,1),\cdots,(2,\frac{k}{2}-1),(3,\frac{k}{2}-1),(3,\frac{k}{2}), \cdots,(3,k-1),\cdots,(k,\frac{k}{2}+1) \rangle.$ Now we adjoin a pendant edge to each vertex of $Z_1$ and $Z_2$ in the lexicographic order as per the availability of the vertex, so that we get two edge-disjoint spanning subgraphs of $C_k \Box C_k$ which are isomorphic to $L_{k^2}.$ This completes the proof.
\end{proof}

For an illustration, an $L_{64}$-decomposition of $C_8 \Box C_8$ is shown in Figure 2. For convenience, edges of the cycles $C_{32}$ are shown by lines and edges with the pendant vertices by dotted lines in both the copies of $L_{64}.$

\vspace{0.2cm}
\begin{center}
    \scalebox{0.7}{
        \begin{tikzpicture}
        \draw [fill=black] (0,0) circle (0.1);
        \draw [fill=black] (1,0) circle (0.1);
        \draw [fill=black] (2,0) circle (0.1);
        \draw [fill=black] (3,0) circle (0.1);
        \draw [fill=black] (4,0) circle (0.1);
        \draw [fill=black] (5,0) circle (0.1);
        \draw [fill=black] (6,0) circle (0.1);
        \draw [fill=black] (7,0) circle (0.1);
        
        \draw [fill=black] (0,1) circle (0.1);
        \draw [fill=black] (1,1) circle (0.1);
        \draw [fill=black] (2,1) circle (0.1);
        \draw [fill=black] (3,1) circle (0.1);
        \draw [fill=black] (4,1) circle (0.1);
        \draw [fill=black] (5,1) circle (0.1);
        \draw [fill=black] (6,1) circle (0.1);
        \draw [fill=black] (7,1) circle (0.1);
        
        \draw [fill=black] (0,2) circle (0.1);
        \draw [fill=black] (1,2) circle (0.1);
        \draw [fill=black] (2,2) circle (0.1);
        \draw [fill=black] (3,2) circle (0.1);
        \draw [fill=black] (4,2) circle (0.1);
        \draw [fill=black] (5,2) circle (0.1);
        \draw [fill=black] (6,2) circle (0.1);
        \draw [fill=black] (7,2) circle (0.1);
        
        \draw [fill=black] (0,3) circle (0.1);
        \draw [fill=black] (1,3) circle (0.1);
        \draw [fill=black] (2,3) circle (0.1);
        \draw [fill=black] (3,3) circle (0.1);
        \draw [fill=black] (4,3) circle (0.1);
        \draw [fill=black] (5,3) circle (0.1);
        \draw [fill=black] (6,3) circle (0.1);
        \draw [fill=black] (7,3) circle (0.1);
        
        \draw [fill=black] (0,4) circle (0.1);
        \draw [fill=black] (1,4) circle (0.1);
        \draw [fill=black] (2,4) circle (0.1);
        \draw [fill=black] (3,4) circle (0.1);
        \draw [fill=black] (4,4) circle (0.1);
        \draw [fill=black] (5,4) circle (0.1);
        \draw [fill=black] (6,4) circle (0.1);
        \draw [fill=black] (7,4) circle (0.1);
        
        \draw [fill=black] (0,5) circle (0.1);
        \draw [fill=black] (1,5) circle (0.1);
        \draw [fill=black] (2,5) circle (0.1);
        \draw [fill=black] (3,5) circle (0.1);
        \draw [fill=black] (4,5) circle (0.1);
        \draw [fill=black] (5,5) circle (0.1);
        \draw [fill=black] (6,5) circle (0.1);
        \draw [fill=black] (7,5) circle (0.1);
        
        \draw [fill=black] (0,6) circle (0.1);
        \draw [fill=black] (1,6) circle (0.1);
        \draw [fill=black] (2,6) circle (0.1);
        \draw [fill=black] (3,6) circle (0.1);
        \draw [fill=black] (4,6) circle (0.1);
        \draw [fill=black] (5,6) circle (0.1);
        \draw [fill=black] (6,6) circle (0.1);
        \draw [fill=black] (7,6) circle (0.1);
        
        \draw [fill=black] (0,7) circle (0.1);
        \draw [fill=black] (1,7) circle (0.1);
        \draw [fill=black] (2,7) circle (0.1);
        \draw [fill=black] (3,7) circle (0.1);
        \draw [fill=black] (4,7) circle (0.1);
        \draw [fill=black] (5,7) circle (0.1);
        \draw [fill=black] (6,7) circle (0.1);
        \draw [fill=black] (7,7) circle (0.1);
        
        \draw[line width=0.3mm] (0,0)--(0,3)--(1,3)--(1,6)--(2,6)--(2,7)..controls(2.8,3.5)..(2,0)--(2,1)--(3,1)--(3,4)--(4,4)--(4,7)--(5,7)..controls(5.8,3.5)..(5,0)--(5,2)--(6,2)--(6,5)--(7,5)--(7,7)..controls(7.8,3.5)..(7,0)..controls(3.5,-0.8)..(0,0);
        
        \draw[dotted] [line width=0.4mm] (0,0) -- (1,0) (0,1) -- (1,1) (0,2) -- (1,2) (0,3) -- (0,4) (1,3) -- (2,3) (1,4) -- (2,4) (1,5) -- (2,5) (1,6) -- (1,7) (2,0) -- (3,0) (2,1) -- (2,2) (2,6) -- (3,6) (2,7) -- (3,7) (3,1)--(4,1) (3,2)--(4,2) (3,3)--(4,3) (3,4)--(3,5) (4,4)--(5,4) (4,5)--(5,5) (4,6)--(5,6) (4,7)..controls(4.8,3.5)..(4,0) (5,0)--(6,0) (5,1)--(6,1) (5,2)--(5,3) (5,7)--(6,7) (6,2)--(7,2) (6,3)--(7,3) (6,4)--(7,4) (6,5)--(6,6) (7,0)--(7,1) (7,5)..controls(3.5,5.8)..(0,5) (7,6)..controls(3.5,6.8)..(0,6) (7,7)..controls(3.5,7.8)..(0,7);
        
        \draw [fill=black] (10,0) circle (0.1);
        \draw [fill=black] (11,0) circle (0.1);
        \draw [fill=black] (12,0) circle (0.1);
        \draw [fill=black] (13,0) circle (0.1);
        \draw [fill=black] (14,0) circle (0.1);
        \draw [fill=black] (15,0) circle (0.1);
        \draw [fill=black] (16,0) circle (0.1);
        \draw [fill=black] (17,0) circle (0.1);
        
        \draw [fill=black] (10,1) circle (0.1);
        \draw [fill=black] (11,1) circle (0.1);
        \draw [fill=black] (12,1) circle (0.1);
        \draw [fill=black] (13,1) circle (0.1);
        \draw [fill=black] (14,1) circle (0.1);
        \draw [fill=black] (15,1) circle (0.1);
        \draw [fill=black] (16,1) circle (0.1);
        \draw [fill=black] (17,1) circle (0.1);
        
        \draw [fill=black] (10,2) circle (0.1);
        \draw [fill=black] (11,2) circle (0.1);
        \draw [fill=black] (12,2) circle (0.1);
        \draw [fill=black] (13,2) circle (0.1);
        \draw [fill=black] (14,2) circle (0.1);
        \draw [fill=black] (15,2) circle (0.1);
        \draw [fill=black] (16,2) circle (0.1);
        \draw [fill=black] (17,2) circle (0.1);
        
        \draw [fill=black] (10,3) circle (0.1);
        \draw [fill=black] (11,3) circle (0.1);
        \draw [fill=black] (12,3) circle (0.1);
        \draw [fill=black] (13,3) circle (0.1);
        \draw [fill=black] (14,3) circle (0.1);
        \draw [fill=black] (15,3) circle (0.1);
        \draw [fill=black] (16,3) circle (0.1);
        \draw [fill=black] (17,3) circle (0.1);
        
        \draw [fill=black] (10,4) circle (0.1);
        \draw [fill=black] (11,4) circle (0.1);
        \draw [fill=black] (12,4) circle (0.1);
        \draw [fill=black] (13,4) circle (0.1);
        \draw [fill=black] (14,4) circle (0.1);
        \draw [fill=black] (15,4) circle (0.1);
        \draw [fill=black] (16,4) circle (0.1);
        \draw [fill=black] (17,4) circle (0.1);
        
        \draw [fill=black] (10,5) circle (0.1);
        \draw [fill=black] (11,5) circle (0.1);
        \draw [fill=black] (12,5) circle (0.1);
        \draw [fill=black] (13,5) circle (0.1);
        \draw [fill=black] (14,5) circle (0.1);
        \draw [fill=black] (15,5) circle (0.1);
        \draw [fill=black] (16,5) circle (0.1);
        \draw [fill=black] (17,5) circle (0.1);
        
        \draw [fill=black] (10,6) circle (0.1);
        \draw [fill=black] (11,6) circle (0.1);
        \draw [fill=black] (12,6) circle (0.1);
        \draw [fill=black] (13,6) circle (0.1);
        \draw [fill=black] (14,6) circle (0.1);
        \draw [fill=black] (15,6) circle (0.1);
        \draw [fill=black] (16,6) circle (0.1);
        \draw [fill=black] (17,6) circle (0.1);
        
        \draw [fill=black] (10,7) circle (0.1);
        \draw [fill=black] (11,7) circle (0.1);
        \draw [fill=black] (12,7) circle (0.1);
        \draw [fill=black] (13,7) circle (0.1);
        \draw [fill=black] (14,7) circle (0.1);
        \draw [fill=black] (15,7) circle (0.1);
        \draw [fill=black] (16,7) circle (0.1);
        \draw [fill=black] (17,7) circle (0.1);
        
        \draw[line width=0.3mm] (10,4)--(10,7)--(11,7)..controls(11.8,3.5)..(11,0)--(11,2)--(12,2)--(12,5)--(13,5)--(13,7)..controls(13.8,3.5)..(13,0)--(14,0)--(14,3)--(15,3)--(15,6)--(16,6)--(16,7)..controls(16.8,3.5)..(16,0)--(16,1)--(17,1)--(17,4)..controls(13.5,3.2)..(10,4);
        
        \draw[dotted] [line width=0.4mm] (10,4)--(11,4) (10,5)--(11,5) (10,6)--(11,6) (10,7)..controls(9.2,3.5)..(10,0) (11,0)--(12,0) (11,1)--(12,1) (11,2)--(11,3) (11,7)--(12,7) (12,2)--(13,2) (12,3)--(13,3) (12,4)--(13,4) (12,5)--(12,6) (13,0)--(13,1) (13,5)--(14,5) (13,6)--(14,6) (13,7)--(14,7) (14,0)--(15,0) (14,1)--(15,1) (14,2)--(15,2) (14,3)--(14,4) (15,3)--(16,3) (15,4)--(16,4) (15,5)--(16,5) (15,6)--(15,7) (16,0)--(17,0) (16,1)--(16,2) (16,6)--(17,6) (16,7)--(17,7) (17,4)--(17,5) (17,1)..controls(13.5,0.2)..(10,1) (17,2)..controls(13.5,1.2)..(10,2) (17,3)..controls(13.5,2.2)..(10,3);
       
         \node at (8.5,-1.5) {\large Figure 2. An $L_{64}$-decomposition of $C_8 \Box C_8$};
        \end{tikzpicture}}
\end{center}

The following result is a corollary of the above lemma.
\begin{corollary} \label{4}
For any integer $n \geq 1,$ there exists an $L_{2^{4n}}$-decomposition of $Q_{4n}.$ In other words, $Q_{4n}$ has a decomposition into the spanning sunlet graphs for any integer $n \geq 1.$
\end{corollary}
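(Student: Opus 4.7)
The plan is to realise $Q_{4n}$ as the Cartesian product $Q_{2n}\Box Q_{2n}$ and then slice that product into $n$ edge-disjoint copies of a ``square of a Hamiltonian cycle'' $C_{2^{2n}}\Box C_{2^{2n}}$, to which Lemma \ref{2} applies directly. Since $Q_{2n}$ is $2n$-regular, Corollary \ref{C} (with $s=2^{2n}$, i.e.\ $t=2n$) produces a decomposition of $Q_{2n}$ into exactly $n$ edge-disjoint Hamiltonian cycles $H_1,H_2,\dots,H_n$, each isomorphic to $C_{2^{2n}}$.

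Next I would exploit the standard edge partition of a Cartesian square. Every edge of $Q_{4n}=Q_{2n}\Box Q_{2n}$ is either horizontal, of the form $\{(u,v),(u',v)\}$ with $uu'\in E(Q_{2n})$, or vertical, of the form $\{(u,v),(u,v')\}$ with $vv'\in E(Q_{2n})$. Labelling each horizontal edge by the unique index $i$ with $uu'\in E(H_i)$, and each vertical edge by the unique index $j$ with $vv'\in E(H_j)$, and then for each $i$ collecting the horizontal edges of label $i$ together with the vertical edges of label $i$, yields a partition
\[
E(Q_{4n})=\bigsqcup_{i=1}^{n} E(H_i\Box H_i),
\]
where each summand is isomorphic to $C_{2^{2n}}\Box C_{2^{2n}}$.

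Finally, for each $i$ I would apply Lemma \ref{2} with $k=2^{2n}$; since $n\geq 1$ we have $k\geq 4$ and $k$ is even, so $H_i\Box H_i$ splits into two spanning copies of $L_{k^2}=L_{2^{4n}}$. Taking the union over $i=1,\dots,n$ gives $2n$ edge-disjoint spanning copies of $L_{2^{4n}}$ decomposing $Q_{4n}$, which matches the edge count $4n\cdot 2^{4n-1}=2n\cdot 2^{4n}$.

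The only place demanding care is the pairing step: one must verify that horizontal edges of label $i$ together with vertical edges of label $i$ really form $E(H_i\Box H_i)$ rather than some larger or smaller graph, which follows because $V(H_i)=V(Q_{2n})$ so every horizontal edge $\{(u,v),(u',v)\}$ with $uu'\in E(H_i)$ sits in $H_i\Box H_i$ regardless of $v$, and symmetrically for vertical edges; once this identification is made the rest is just quoting Corollary \ref{C} and Lemma \ref{2}.
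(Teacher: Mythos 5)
Your proposal is correct and follows essentially the same route as the paper: write $Q_{4n}=Q_{2n}\Box Q_{2n}$, use Corollary \ref{C} to decompose $Q_{2n}$ into $n$ Hamiltonian cycles $Z_1,\dots,Z_n$, observe that $Z_1\Box Z_1,\dots,Z_n\Box Z_n$ partition the edges of $Q_{4n}$, and apply Lemma \ref{2} to each factor. The only difference is that you spell out the horizontal/vertical edge-labelling argument justifying the partition, which the paper simply asserts.
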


\begin{proof}
We can write $Q_{4n}=Q_{2n} \Box Q_{2n}.$ By Corollary \ref{C}, $Q_{2n}$ has a decomposition into Hamiltonian cycles. Let $Z_1,Z_2, \cdots, Z_n$ be Hamiltonian cycles in $Q_{2n}$ such that the collection $\{Z_1,Z_2, \cdots, Z_n\}$ decomposes $Q_{2n}.$ Then $Z_1 \Box Z_1,Z_2 \Box Z_2, \cdots, Z_n \Box Z_n$ are edge-disjoint spanning subgraphs of $Q_{4n}$ and their collection decomposes $Q_{4n}.$ By Lemma \ref{2}, each $Z_i \Box Z_i$ has an $L_{2^{4n}}$-decomposition. Hence $Q_{4n}$ has an $L_{2^{4n}}$-decomposition.
\end{proof}

Now we prove the necessary condition for the existence of an $L_{16}$-decomposition of $Q_n$ is also sufficient.

We need the following four lemmas to prove the sufficient condition.

\begin{lemma} \label{6}
There exists an $L_{16}$-decomposition of $Q_6.$
\end{lemma}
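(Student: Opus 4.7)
The plan is to write $Q_6 = Q_4 \Box Q_2$ and view the vertex set as four copies of $V(Q_4)$, namely the hyperplanes $L_a = V(Q_4) \times \{a\}$ for $a \in V(Q_2) = \{00, 01, 10, 11\}$. Each hyperplane spans a copy of $Q_4$ with $32$ edges; for each of the four edges $ab$ of $Q_2$ there is a perfect matching $M^{ab}$ of $16$ edges between $L_a$ and $L_b$, accounting for $4\cdot 32 + 4\cdot 16 = 192$ edges in total. I will partition $E(Q_6)$ into four groups: $G_1 = E(L_{00}) \cup M^{00,01} \cup M^{00,10}$, $G_2 = E(L_{11}) \cup M^{11,01} \cup M^{11,10}$, $G_3 = E(L_{01})$, and $G_4 = E(L_{10})$, of sizes $64, 64, 32, 32$. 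Decomposing each $G_i$ into sunlets will produce $4 + 4 + 2 + 2 = 12 = 192/16$ copies of $L_{16}$.

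For $G_3$ and $G_4$, Corollary~\ref{4} with $n = 1$ supplies an $L_{16}$-decomposition of each $Q_4$ directly, giving $2 + 2 = 4$ copies.

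For $G_1$, the key step is to exhibit two edge-disjoint $2$-factors $F_1, F_2$ of $L_{00} \cong C_4 \Box C_4$, each being a disjoint union of two vertex-disjoint $8$-cycles whose vertex sets partition $V(L_{00})$. Writing vertices as $(i,j) \in \mathbb{Z}_4 \times \mathbb{Z}_4$, an explicit choice is the ``zig-zag'' pair: $F_1$ consists of the $C_8$ on rows $\{0,1\}$ with vertex sequence $(0,0), (1,0), (1,1), (0,1), (0,2), (1,2), (1,3), (0,3)$, together with its translate by $(2,0)$ on rows $\{2,3\}$; $F_2$ consists of the analogous zig-zag $C_8$ on rows $\{0,3\}$ (using the $C_4$-wrap edges between these rows) together with its translate on rows $\{1,2\}$. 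A direct check shows that $F_1$ and $F_2$ are edge-disjoint and together cover all $32$ edges of $L_{00}$. Writing $F_1 = Z_1 \cup Z_2$ and $F_2 = Z_3 \cup Z_4$, I form an $L_{16}$ from each $Z_i$ ($i = 1, 2$) by attaching to every $u \in V(Z_i)$ the unique edge of $M^{00,01}$ incident to $u$ as a pendant; since $V(Z_1)$ and $V(Z_2)$ partition $V(L_{00})$, this consumes exactly all $16$ edges of $M^{00,01}$. Using $Z_3$ and $Z_4$ in the same way with pendants from $M^{00,10}$ produces two further copies. Group $G_2$ is handled identically with $L_{11}$ in place of $L_{00}$.

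The main obstacle is constructing the two $2$-factors $F_1, F_2$ of $Q_4$ with the required vertex-partition structure; once they are in hand, the matching edges of $Q_2$ attach as pendants with no further choice, and the partition of $E(Q_6)$ into $G_1, G_2, G_3, G_4$ guarantees that the $12$ constructed sunlet subgraphs are pairwise edge-disjoint and cover every edge of $Q_6$, giving the desired $L_{16}$-decomposition.
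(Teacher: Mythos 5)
Your proposal is correct and is essentially the paper's own argument: the paper also writes $Q_6 = Q_4 \Box C_4$, gives two antipodal copies of $Q_4$ internal $L_{16}$-decompositions (via Lemma~\ref{2}), and decomposes the other two copies into $8$-cycles, attaching the matching edges toward the neighbouring copies as pendants; you have merely swapped which antipodal pair plays which role, and you make explicit (which the paper leaves implicit) that the four $8$-cycles must pair into two spanning $2$-factors so that each matching is consumed exactly once. One small caution: the literal translate of your zig-zag pattern to rows $\{0,3\}$ would reuse horizontal edges of $F_1$, so $F_2$ must be taken as the complementary zig-zag using the remaining horizontal edges of rows $0$ and $3$ (e.g.\ $(0,0),(0,1),(3,1),(3,2),(0,2),(0,3),(3,3),(3,0)$), which does make $F_1$ and $F_2$ edge-disjoint as you claim.
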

\begin{proof}
Write $Q_6$ as $Q_6 = Q_4 \Box C_4$ as $C_4 = Q_2.$ Thus $Q_6$ is obtained by replacing each vertex of $C_4$ by a copy of $Q_4$ and replacing each edge of $C_4$ by a matching between two copies of $Q_4$ corresponding to the end vertices of that edge. Let $C_4 = \langle 0,1,2,3,0 \rangle$ and $Q_4^0, Q_4^1, Q_4^2, Q_4^3$ be copies of $Q_4$ in $Q_6$ corresponding to vertices $0,1,2,3$ of $C_4,$ respectively. For $i \in \{0,2\},$ $Q_4^i$ has an $L_{16}$-decomposition by Lemma \ref{2} as each $Q_4^i$ can be written as the Cartesian product of cycles of length $4.$ For $i \in \{1,3\},$ from each vertex of $Q_4^i,$ exactly two cycles of length eight passes as $Q_4^i$ has a $C_8$-decomposition by Corollary \ref{C}. Adjoin each vertex of one of two cycles to the corresponding vertex in $Q_4^0,$ and adjoin each vertex of the other cycle to the corresponding vertex in $Q_4^2.$ So, from each copy of the cycle of length eight, we get a copy of $L_{16}.$ This completes the proof.
\end{proof}

\begin{lemma} \label{7}
There exists an $L_{16}$-decomposition of $Q_7.$
\end{lemma}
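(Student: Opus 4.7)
My plan is to realize $Q_7$ as $Q_4 \Box Q_3$ and extend the strategy of Lemma~\ref{6}, with the $3$-regular base $Q_3$ replacing the $2$-regular base $C_4$ used there. Write $Q_4^u$ for the copy of $Q_4$ in $Q_7$ indexed by a vertex $u \in V(Q_3)$. First I would pick a pair of antipodal vertices $u_1 = 000$ and $u_2 = 111$ of $Q_3$; these are non-adjacent and their neighborhoods $\{100, 010, 001\}$ and $\{011, 101, 110\}$ are disjoint. On each of the two ``central'' copies $Q_4^{u_1}, Q_4^{u_2}$ I apply Lemma~\ref{2} with $k = 4$ to produce two spanning $L_{16}$ copies, giving $4$ ``internal'' sunlets whose edges are precisely those of $Q_4^{u_1}$ and $Q_4^{u_2}$.

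Next, for each of the six remaining ``peripheral'' vertices $u$, I use Corollary~\ref{C} to obtain a $C_8$-decomposition of $Q_4^u$ into four $8$-cycles which, as in Lemma~\ref{6}, split into two edge-disjoint $2$-factors, each being a pair of vertex-disjoint spanning $C_8$'s. I then orient the edges of $Q_3$ by sending every edge incident to $u_1$ or $u_2$ inward, and orienting the six ``peripheral-to-peripheral'' edges---which form the $6$-cycle $100\text{-}110\text{-}010\text{-}011\text{-}001\text{-}101$---cyclically. Each peripheral $u$ then has out-degree $2$ (to some pair $u', u''$) and in-degree $1$. For each such $u$, I attach the pendants of the first $2$-factor of $Q_4^u$ via matching edges to the corresponding vertices of $Q_4^{u'}$, and the pendants of the second $2$-factor to the corresponding vertices of $Q_4^{u''}$. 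Each $C_8$ together with its $8$ matching pendants is then a copy of $L_{16}$, contributing $6 \cdot 4 = 24$ further sunlets.

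This gives $4 + 24 = 28 = 7 \cdot 2^{6}/16$ copies of $L_{16}$, which is the correct total. Consistency is clear: since each $2$-factor spans $Q_4^u$, the pendants of the two $L_{16}$'s carrying the out-direction $u \to u'$ exhaust precisely the $16$ matching edges across the $Q_3$-edge $uu'$, so the orientation distributes the $192$ $Q_3$-direction matching edges of $Q_7$ bijectively among the peripheral sunlets; the $Q_4$-direction edges are then exhausted by the cycles of the internal $L_{16}$'s (at $u_1, u_2$) together with the cycles of the peripheral ones (elsewhere). The main obstacle I expect is the combinatorial step of producing the ``two-$2$-factor'' splitting of the $C_8$-decomposition of $Q_4$---this is already implicit in Lemma~\ref{6}, and can be made explicit by taking the two spanning $8$-cycles $Z_1, Z_2$ of Lemma~\ref{2} as one $2$-factor and constructing two further spanning $8$-cycles on the complementary $16$ edges of $C_4 \Box C_4$ as the other---together with checking that the prescribed out-degree profile on $Q_3$ is realizable, which it is via the cyclic orientation of the indicated peripheral $6$-cycle.
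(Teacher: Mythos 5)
Your proposal is correct and follows essentially the same route as the paper: the paper also writes $Q_7 = Q_4 \Box Q_3$, orients $Q_3$ with two sinks of in-degree $3$ and six vertices of out-degree $2$ and in-degree $1$ (your explicit orientation with antipodal sinks $000,111$ and the cyclically oriented peripheral $6$-cycle is exactly such an orientation, given in the paper only as Figure 3), uses Lemma~\ref{2} on the sink copies and a $C_8$-decomposition with pendant attachments along out-edges on the rest. Your explicit attention to splitting the $C_8$-decomposition of $Q_4$ into two spanning $2$-factors is a welcome clarification of a point the paper leaves implicit, but the argument is the same.
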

\begin{proof}
Write $Q_7$ as $Q_7 = Q_4 \Box Q_3.$ Let $D$ be a directed graph obtained from $Q_3$ by giving directions to the edges, as shown in Figure 3.

\vspace{0.2cm}
\begin{center}
    \scalebox{1}{
        \begin{tikzpicture}
        \draw [fill=black] (0,0) circle (0.1);
        \draw [fill=black] (3,0) circle (0.1);
        \draw [fill=black] (0,3) circle (0.1);
        \draw [fill=black] (3,3) circle (0.1);
        \draw [fill=black] (1,1) circle (0.1);
        \draw [fill=black] (2,1) circle (0.1);
        \draw [fill=black] (2,2) circle (0.1);
        \draw [fill=black] (1,2) circle (0.1);
       
        \draw (0,0) -- (3,0) -- (3,3) -- (0,3) -- (0,0) (1,1) -- (2,1) -- (2,2) -- (1,2) -- (1,1) -- (0,0) (2,1) -- (3,0) (2,2) -- (3,3) (1,2) -- (0,3);
        
        \draw[decoration={markings,mark=at position 1 with
        {\arrow[scale=3,>=stealth]{>}}},postaction={decorate}]
        (0.6,0) -- (0.5,0);
        \draw[decoration={markings,mark=at position 1 with
        {\arrow[scale=3,>=stealth]{>}}},postaction={decorate}]
        (0.4,0.4) -- (0.3,0.3);
        \draw[decoration={markings,mark=at position 1 with
        {\arrow[scale=3,>=stealth]{>}}},postaction={decorate}]
        (0,0.6) -- (0,0.5);
        \draw[decoration={markings,mark=at position 1 with
        {\arrow[scale=3,>=stealth]{>}}},postaction={decorate}]
        (3,2.5) -- (3,2.6);
        \draw[decoration={markings,mark=at position 1 with
        {\arrow[scale=3,>=stealth]{>}}},postaction={decorate}]
        (2.6,2.6) -- (2.7,2.7);
        \draw[decoration={markings,mark=at position 1 with
        {\arrow[scale=3,>=stealth]{>}}},postaction={decorate}]
        (2.5,3) -- (2.6,3);
        \draw[decoration={markings,mark=at position 1 with
        {\arrow[scale=3,>=stealth]{>}}},postaction={decorate}]
        (1.4,1) -- (1.3,1);
        \draw[decoration={markings,mark=at position 1 with
        {\arrow[scale=3,>=stealth]{>}}},postaction={decorate}]
        (1,1.6) -- (1,1.7);
        \draw[decoration={markings,mark=at position 1 with
        {\arrow[scale=3,>=stealth]{>}}},postaction={decorate}]
        (1.6,2) -- (1.7,2);
        \draw[decoration={markings,mark=at position 1 with
        {\arrow[scale=3,>=stealth]{>}}},postaction={decorate}]
        (2,1.4) -- (2,1.3);
        \draw[decoration={markings,mark=at position 1 with
        {\arrow[scale=3,>=stealth]{>}}},postaction={decorate}]
        (0.4,2.6) -- (0.3,2.7);
        \draw[decoration={markings,mark=at position 1 with
        {\arrow[scale=3,>=stealth]{>}}},postaction={decorate}]
        (2.6,0.4) -- (2.7,0.3);
        
        \node at (1.5,-0.6) {Figure 3.};
        
       \end{tikzpicture}}
\end{center}

In $D,$ there are two vertices with in-degree 3 and out-degree 0, and the in-degrees and out-degrees of remaining all vertices are 1 and 2, respectively. The graph $Q_7$ is obtained by replacing each vertex of $Q_3$ with a copy of $Q_4$ and replacing each edge of $Q_3$ by a matching between two copies of $Q_4$ corresponding to the end vertices of that edge. Consider an $L_{16}$-decomposition of copies of $Q_4$ corresponding to each vertex of $D$ with out-degree 0, and a $C_8$-decomposition of copies of $Q_4$ corresponding to each vertex of $D$ with out-degree 2. In a $C_8$-decomposition of copies of $Q_4,$ exactly two cycles pass from each vertex. Adjoin a pedant edge to each vertex of copies of $Q_4$ of a vertex corresponding the out-degree 2, to one of the vertices of its nearest copy of $Q_4$ according to the direction of the corresponding edge in $D.$ Then we get $L_{16}$ from each $C_8$ from a $C_8$-decomposition of each copy of $Q_4$ of a vertex corresponding to the out-degree 2. Hence we get an $L_{16}$-decomposition of $Q_7.$
\end{proof}

\begin{lemma} \label{9}
There exists an $L_{16}$-decomposition of $Q_9.$
\end{lemma}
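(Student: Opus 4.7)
The plan is to write $Q_9 = Q_6 \Box Q_3$ and exploit the bipartiteness of $Q_3.$ Since $Q_3$ is bipartite with vertex classes $A$ and $B$ of size $4$ each, I will orient every edge of $Q_3$ from $A$ to $B,$ so that each vertex of $A$ has out-degree $3$ and each vertex of $B$ has out-degree $0.$ Viewing $Q_9$ in the usual way as a copy $Q_6^w$ of $Q_6$ attached at each vertex $w$ of $Q_3,$ together with a perfect matching of size $64$ across each edge of $Q_3,$ the aim is to handle the $B$-copies purely internally and to absorb each $A$-copy together with its three outgoing matchings into a single family of $L_{16}$'s.

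For each $v \in B,$ I would apply Lemma \ref{6} to decompose $Q_6^v$ into $12$ sunlet graphs $L_{16},$ which contributes $48$ copies in total. For each $u \in A,$ I would use Corollary \ref{C} to decompose $Q_6^u$ into $24$ cycles of length $8.$ Since every vertex of $Q_6^u$ has degree $6$ and contributes degree $2$ to each cycle through it, every vertex lies on exactly three of these cycles. Because $u$ has out-degree $3,$ every vertex of $Q_6^u$ is also incident with exactly three outgoing matching edges, one going into each $Q_6^{v_i}$ for $v_i$ a neighbor of $u$ in $B.$

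At each vertex $x$ of $Q_6^u$ I would fix any bijection between the three $C_8$'s through $x$ and the three outgoing matching edges at $x,$ and attach each outgoing matching edge as the pendant at $x$ in the corresponding cycle. Each enlarged $C_8$ then becomes a copy of $L_{16}$: its eight pendant endpoints lie in the disjoint copies $Q_6^{v_1}, Q_6^{v_2}, Q_6^{v_3}$ and are pairwise distinct, because each matching is a bijection of vertex sets. This produces $24$ copies of $L_{16}$ per vertex of $A$ and $96$ in total; together with the $48$ copies from $B$ this gives $144 = 9 \cdot 2^{8}/16$ copies of $L_{16},$ which accounts for every edge of $Q_9$ exactly once.

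The main point to check, and what really makes the construction go through, is the numerical balance at an $A$-vertex: the $192$ internal edges of $Q_6^u$ are matched by $3 \cdot 64 = 192$ outgoing matching edges, and locally the $3$ cycles through each vertex are matched by the $3$ outgoing matching edges at that vertex. This is precisely what makes the bipartite orientation (rather than something like the orientation used in Lemma \ref{7}) the convenient choice here: out-degrees $3$ and $0$ are exactly the two values for which the $C_8$-plus-pendant construction and the direct $L_{16}$-decomposition, respectively, apply without any further adjustment.
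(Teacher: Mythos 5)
Your proof is correct and takes essentially the same approach as the paper: the orientation of $Q_3$ used there (Figure 4, with four vertices of out-degree $0$ and four of out-degree $3$) is exactly the bipartite orientation you describe, with $L_{16}$-decompositions of the sink copies of $Q_6$ and $C_8$-decompositions plus outgoing pendant edges at the source copies. Your explicit bookkeeping (the bijection at each vertex between the three cycles and the three outgoing matching edges, and the count $48+96=144$) just makes precise what the paper leaves implicit.
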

\begin{proof}
Write $Q_9$ as $Q_9 = Q_6 \Box Q_3.$ Let $D$ be a directed graph obtained from $Q_3$ by giving directions to the edges, as shown in Figure 4.

\vspace{0.2cm}
\begin{center}
    \scalebox{1}{
        \begin{tikzpicture}
        \draw [fill=black] (0,0) circle (0.1);
        \draw [fill=black] (3,0) circle (0.1);
        \draw [fill=black] (0,3) circle (0.1);
        \draw [fill=black] (3,3) circle (0.1);
        \draw [fill=black] (1,1) circle (0.1);
        \draw [fill=black] (2,1) circle (0.1);
        \draw [fill=black] (2,2) circle (0.1);
        \draw [fill=black] (1,2) circle (0.1);
       
        \draw (0,0) -- (3,0) -- (3,3) -- (0,3) -- (0,0) (1,1) -- (2,1) -- (2,2) -- (1,2) -- (1,1) -- (0,0) (2,1) -- (3,0) (2,2) -- (3,3) (1,2) -- (0,3);
        
        \draw[decoration={markings,mark=at position 1 with
        {\arrow[scale=3,>=stealth]{>}}},postaction={decorate}]
        (0.6,0) -- (0.5,0);
        \draw[decoration={markings,mark=at position 1 with
        {\arrow[scale=3,>=stealth]{>}}},postaction={decorate}]
        (0.4,0.4) -- (0.3,0.3);
        \draw[decoration={markings,mark=at position 1 with
        {\arrow[scale=3,>=stealth]{>}}},postaction={decorate}]
        (0,0.6) -- (0,0.5);
        \draw[decoration={markings,mark=at position 1 with
        {\arrow[scale=3,>=stealth]{>}}},postaction={decorate}]
        (3,2.5) -- (3,2.6);
        \draw[decoration={markings,mark=at position 1 with
        {\arrow[scale=3,>=stealth]{>}}},postaction={decorate}]
        (2.6,2.6) -- (2.7,2.7);
        \draw[decoration={markings,mark=at position 1 with
        {\arrow[scale=3,>=stealth]{>}}},postaction={decorate}]
        (2.5,3) -- (2.6,3);
        \draw[decoration={markings,mark=at position 1 with
        {\arrow[scale=3,>=stealth]{>}}},postaction={decorate}]
        (1.6,1) -- (1.7,1);
        \draw[decoration={markings,mark=at position 1 with
        {\arrow[scale=3,>=stealth]{>}}},postaction={decorate}]
        (1,1.6) -- (1,1.7);
        \draw[decoration={markings,mark=at position 1 with
        {\arrow[scale=3,>=stealth]{>}}},postaction={decorate}]
        (1.4,2) -- (1.3,2);
        \draw[decoration={markings,mark=at position 1 with
        {\arrow[scale=3,>=stealth]{>}}},postaction={decorate}]
        (2,1.4) -- (2,1.3);
        \draw[decoration={markings,mark=at position 1 with
        {\arrow[scale=3,>=stealth]{>}}},postaction={decorate}]
        (0.7,2.3) -- (0.8,2.2);
        \draw[decoration={markings,mark=at position 1 with
        {\arrow[scale=3,>=stealth]{>}}},postaction={decorate}]
        (2.3,0.7) -- (2.2,0.8);
        
        \node at (1.5,-0.6) {Figure 4.};
       \end{tikzpicture}}
\end{center}

In $D,$ there are four vertices with out-degree 0, and the out-degree of the remaining four vertices is 3. The graph $Q_9$ is obtained by replacing each vertex of $Q_3$ with a copy of $Q_6$ and replacing each edge of $Q_3$ by a matching between two copies of $Q_6$ corresponding to the end vertices of that edge. Consider an $L_{16}$-decomposition of copies of $Q_6$ of vertices corresponding to the out-degree 0 and a $C_8$-decomposition of copies of $Q_6$ of vertices corresponding to the out-degree 3. In a $C_8$-decomposition of copies of $Q_6,$ exactly three cycles pass from each vertex. Adjoin a pedant edge to each vertex of copies of $Q_6$ corresponding to each vertex with out-degree 3, to one of the vertices of its nearest copy of $Q_6$ according to the direction of the corresponding edge in $D.$ Then we get a copy of $L_{16}$ from each copy of $C_8$ from a $C_8$-decomposition of each copy of $Q_6$ corresponding to each vertex with out-degree 3. Hence we get an $L_{16}$-decomposition of $Q_9.$
\end{proof}

The following lemma follows from the definition of the Cartesian product of graphs.
\begin{lemma} \label{def}
If  the graphs $G_1$ and  $G_2$ each has an $H$-decomposition, then the graph $G_1 \Box G_2$ has an $H$-decomposition.
\end{lemma}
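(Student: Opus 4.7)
The plan is to exploit the natural partition of $E(G_1 \Box G_2)$ into ``horizontal'' and ``vertical'' slices. By definition of the Cartesian product, for each fixed $y \in V(G_2)$ the set of edges $\{((x,y),(u,y)) : xu \in E(G_1)\}$ induces a subgraph isomorphic to $G_1$; call it $G_1^{(y)}$. Symmetrically, for each fixed $x \in V(G_1)$, the edges $\{((x,y),(x,v)) : yv \in E(G_2)\}$ form a subgraph $G_2^{(x)} \cong G_2$. These subgraphs are pairwise edge-disjoint (horizontal copies at different levels $y$ have distinct edges, vertical copies over different $x$ have distinct edges, and horizontal and vertical edges are distinguished by whether the first or second coordinate changes), and together they exhaust $E(G_1 \Box G_2)$.

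The second step is to import the given $H$-decompositions into each slice. Let $\{H_1,\ldots,H_r\}$ be an $H$-decomposition of $G_1$ and $\{H'_1,\ldots,H'_s\}$ be an $H$-decomposition of $G_2$. For each $y \in V(G_2)$, the isomorphism $G_1 \to G_1^{(y)}$ sending $x \mapsto (x,y)$ transports $\{H_1,\ldots,H_r\}$ to an $H$-decomposition of $G_1^{(y)}$; similarly for each $x \in V(G_1)$, we obtain an $H$-decomposition of $G_2^{(x)}$.

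Finally, I would take the union of all these local $H$-decompositions, indexed over $y \in V(G_2)$ for the horizontal slices and over $x \in V(G_1)$ for the vertical slices. Each member is isomorphic to $H$, and by the edge-disjointness observations above together with the fact that within a single slice the decomposition partitions that slice's edges, the collection partitions $E(G_1 \Box G_2)$. This yields the desired $H$-decomposition.

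There is no real obstacle here; the statement is essentially a bookkeeping consequence of the definition of $\Box$, and the proof amounts to writing down the slice decomposition and observing that edge-disjointness is automatic. The only thing to be careful about is verifying that horizontal and vertical slice-edges are genuinely disjoint, which follows immediately from the defining adjacency rule of the Cartesian product.
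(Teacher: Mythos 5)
Your proof is correct and is exactly the argument the paper has in mind: the paper omits the proof entirely, remarking only that the lemma "follows from the definition of the Cartesian product," and the intended justification is precisely your partition of $E(G_1 \Box G_2)$ into horizontal copies of $G_1$ and vertical copies of $G_2$, each inheriting an $H$-decomposition. No gaps; your write-up simply makes explicit what the paper leaves implicit.
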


In the following lemma, we prove that the sufficient condition for the existence of an $L_{16}$-decomposition of $Q_n$ is $n=4$ or $n \geq 6.$
\begin{lemma} \label{n}
There exists an $L_{16}$-decomposition of $Q_n$ if $n=4$ or $n \geq 6.$
\end{lemma}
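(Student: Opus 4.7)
The plan is to reduce everything to the previously established cases using the Cartesian-product lemma (Lemma \ref{def}) and a short induction with step $4$.

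First I would handle the small base cases. For $n=4$, note that $Q_4 = C_4 \Box C_4$, so Lemma \ref{2} applied with $k=4$ directly gives an $L_{16}$-decomposition. The cases $n=6,7,9$ are exactly Lemmas \ref{6}, \ref{7}, \ref{9}. The remaining base case $n=8$ follows from $Q_8 = Q_4 \Box Q_4$ together with Lemma \ref{def}, using the $L_{16}$-decomposition of $Q_4$ just obtained.

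For $n \geq 10$ I would use induction on $n$. Writing $Q_n = Q_4 \Box Q_{n-4}$ and observing $n-4 \geq 6$, the inductive hypothesis supplies an $L_{16}$-decomposition of $Q_{n-4}$, and Lemma \ref{def} upgrades this (together with the $L_{16}$-decomposition of $Q_4$) to an $L_{16}$-decomposition of $Q_n$. The four base cases $n \in \{6,7,8,9\}$ cover all residues modulo $4$, so the induction step with increment $4$ reaches every integer $n \geq 6$.

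There is essentially no obstacle beyond bookkeeping: the genuine constructive content has been packaged into Lemmas \ref{2}, \ref{6}, \ref{7}, \ref{9}, and the nontrivial step is really choosing the base cases so that adding multiples of $4$ hits every $n \geq 6$. The case $n=8$ is the only one requiring a small extra remark (it is not among the stated lemmas but falls out immediately as $Q_4 \Box Q_4$), and after that the induction runs mechanically.
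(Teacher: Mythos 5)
Your proposal is correct and follows essentially the same route as the paper: base cases $n=4$ (via Lemma \ref{2}), $n=6,7,9$ (via Lemmas \ref{6}, \ref{7}, \ref{9}), $n=8$ as $Q_4 \Box Q_4$ with Lemma \ref{def}, and then induction with step $4$ writing $Q_n = Q_{n-4}\Box Q_4$ for $n\geq 10$. No gaps.
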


\begin{proof}
We prove the result by induction on $n.$ For $n=4,$ the result holds  as $Q_4$ has an $L_{16}$-decomposition by Lemma \ref{2}. For $n=8,$ we write $Q_8 = Q_4 \Box Q_4$ and the result holds by Lemma \ref{def}. For $n \in \{6,7,9\},$ the result follows by Lemmas \ref{6}, \ref{7} and \ref{9}. Suppose that $n \geq 10.$ Assume that the result holds for the $k$-dimensional hypercube for any integer $k$ with $6 \leq k \leq n-1.$ Write $Q_n = Q_{n-4} \Box Q_4.$ By induction hypothesis, $Q_{n-4}$ has an $L_{16}$-decomposition as $n-4 \geq 6.$ Hence $Q_n$ has an $L_{16}$-decomposition by Lemma \ref{def}. This completes the proof.
\end{proof}

The following result follows from Lemmas \ref{5} and \ref{n}.
\begin{theorem}
The necessary and sufficient condition for the existence of an $L_{16}$-decomposition of $Q_n$ is $n = 4$ or $n \geq 6.$
\end{theorem}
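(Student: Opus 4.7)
The plan is to prove the theorem by combining the two directions already established: necessity from Lemma \ref{5} and sufficiency from Lemma \ref{n}. The overall structure is a short assembly argument, but it is worth laying out what each direction rests on.

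For necessity, I would invoke Lemma \ref{5}, which rules out $n\in\{1,2,3,5\}$. The reasoning there splits into two flavors: for $n\in\{1,2,3\}$ the edge count $n\cdot 2^{n-1}\in\{1,4,12\}$ is not a multiple of $16=|E(L_{16})|$, so no $L_{16}$-decomposition can exist; the delicate case is $n=5$, where $|E(Q_5)|=80$ is divisible by $16$ and the obstruction is instead degree-theoretic. Since $L_{16}$ has eight vertices of degree $3$ and $Q_5$ is $5$-regular, no vertex of $Q_5$ can be a degree-$3$ vertex of two distinct copies of $L_{16}$ (else its degree would be at least $6$), so the five copies demand $5\cdot 8=40$ distinct degree-$3$ vertices, exceeding $|V(Q_5)|=32$.

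For sufficiency, I would appeal to Lemma \ref{n}, whose proof is inductive with base cases $n\in\{4,6,7,8,9\}$ and inductive step $Q_n=Q_{n-4}\Box Q_4$ via Lemma \ref{def}. The base case $n=4$ comes from Lemma \ref{2} with $k=4$ (since $Q_4=C_4\Box C_4$); the case $n=8$ uses $Q_8=Q_4\Box Q_4$ together with Lemma \ref{def}; and the remaining bases $n\in\{6,7,9\}$ are Lemmas \ref{6}, \ref{7}, \ref{9}. For $n\ge 10$ one writes $Q_n=Q_{n-4}\Box Q_4$, applies the induction hypothesis to $Q_{n-4}$ (which satisfies $n-4\ge 6$), and invokes Lemma \ref{def}.

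Given that both Lemmas \ref{5} and \ref{n} are already in hand, the theorem follows immediately: every $n$ with $n=4$ or $n\ge 6$ admits an $L_{16}$-decomposition of $Q_n$ by Lemma \ref{n}, and every $n\in\{1,2,3,5\}$ does not by Lemma \ref{5}. The main conceptual obstacle in the full argument is not at this last step but rather in the individual base cases $n\in\{6,7,9\}$ of the sufficiency direction, where one cannot simply factor $Q_n$ as a Cartesian product of smaller $L_{16}$-decomposable cubes; those required a mixed strategy combining $C_8$-decompositions of some fibers with $L_{16}$-decompositions of others, with pendant edges attached along an oriented auxiliary $Q_3$. Since those have been carried out, the present theorem is a one-line corollary.
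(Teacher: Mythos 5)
Your proposal is correct and follows exactly the paper's route: the theorem is obtained by combining Lemma \ref{5} (necessity, via the divisibility and degree-counting obstructions you describe) with Lemma \ref{n} (sufficiency, via the induction on $n$ with base cases $n\in\{4,6,7,8,9\}$). Your recap of what those lemmas rest on is accurate, and the final assembly step matches the paper's one-line derivation.
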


\section{An $L_{2k}$-decomposition of hypercubes}
In this section, we prove that $Q_{mn}$ has an $L_{2k}$-decomposition if $Q_{n}$ has a $C_k$-decomposition for $m \geq 2.$ In next two lemmas, we prove the result for $m=2$ and $m=3.$ Note that a $C_k$-decomposition of $Q_n$ is possible only for an even integer $n \geq 2.$ For $n=2,$ $Q_n = C_4.$
\begin{lemma} \label{2n}
If $Q_{n}$ has a $C_k$-decomposition, then $Q_{2n}$ has an $L_{2k}$-decomposition. 
\end{lemma}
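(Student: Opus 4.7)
The plan is to write $Q_{2n} = Q_n \Box Q_n$ and assemble each sunlet by taking its cycle from a copy of $Q_n$ in the first factor, where the hypothesis supplies a $C_k$-decomposition, and its pendant edges from the second-factor fibres. Label vertices of $Q_{2n}$ by pairs $(u,v)$ with $u,v \in V(Q_n)$, and for each $v \in V(Q_n)$ let $H_v = Q_n \times \{v\}$ be the corresponding horizontal copy, while for each $u \in V(Q_n)$ let $V_u = \{u\} \times Q_n$ be the corresponding vertical copy. If $\{D_1, \ldots, D_t\}$ is the given $C_k$-decomposition of $Q_n$, then $\{D_i \times \{v\} : 1 \leq i \leq t\}$ is a $C_k$-decomposition of $H_v$, so the first-factor edges of $Q_{2n}$ are already partitioned into $2^n t$ cycles of length $k$. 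The task reduces to attaching to each such cycle a pendant edge at each of its $k$ vertices, using only second-factor edges, in such a way that every second-factor edge is used exactly once.

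The second step is an orientation argument that makes the pendant assignment possible. Since $Q_n$ admits a $C_k$-decomposition, $n$ is even; hence each $V_u$ has an Eulerian orientation in which every vertex has in-degree and out-degree $n/2$, and I fix one such orientation for every $u$. At each point $(u,v)$ the horizontal $C_k$-decomposition of $H_v$ has exactly $n/2$ cycles passing through $(u,v)$, because $H_v$ is $n$-regular and every cycle uses two edges at $(u,v)$; and by construction there are exactly $n/2$ outgoing second-factor edges at $(u,v)$. I fix any bijection between these two sets, independently at every vertex $(u,v)$, and take the image of a cycle under this bijection to be its pendant edge at $(u,v)$.

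With the pendants in place, each cycle $D_i \times \{v\}$ together with its $k$ chosen pendants forms a copy of $L_{2k}$: the $k$ pendant endpoints have pairwise distinct first coordinates (hence are distinct from one another), and each has second coordinate different from $v$ (hence lies outside the cycle). The resulting family of sunlets uses every first-factor edge exactly once via the horizontal $C_k$-decompositions, and uses every second-factor edge exactly once because such an edge has a unique tail $(u,v)$ under the chosen orientation and is picked for exactly one cycle by the bijection at that tail.

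The main obstacle I anticipate is the global consistency of the local pendant assignments, i.e.\ making sure no second-factor edge is claimed twice; the Eulerian orientation handles this cleanly by giving every second-factor edge a unique tail, after which the pendant is selected by a purely local bijection. Once this design is in place, the remaining checks, namely that the sunlets are pairwise edge-disjoint and that their edges partition $E(Q_{2n})$, reduce to routine counting against $|E(Q_{2n})| = 2k \cdot t \cdot 2^n$.
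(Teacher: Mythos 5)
Your proposal is correct and follows essentially the same route as the paper: write $Q_{2n}=Q_n\Box Q_n$, place the given $C_k$-decomposition on each first-factor copy, and use an Eulerian orientation (out-degree $n/2$ at every vertex) of the second-factor copies to assign to each of the $n/2$ cycles through a vertex a distinct outgoing pendant edge. Your write-up is in fact a bit more explicit than the paper's about the local bijection and about verifying that the pendant endpoints are distinct, lie off the cycle, and that every second-factor edge is claimed exactly once.
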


\begin{proof}
Suppose $Q_{n}$ has a $C_k$-decomposition. Note that in the $C_k$-decomposition of $Q_{n},$ from each vertex of $Q_{n}$ exactly $\frac{
n}{2}$ cycles passes. We can write $Q_{2n} = Q_{n} \Box Q_{n}.$ Let $W_0, W_1, \cdots, W_{2^{n}-1}$ be copies of $Q_{n}$ in $Q_{2n}$ replaced by vertices of $Q_{n}.$ Then each $W_i$ has a $C_k$-decomposition. Also, there are $n$ copies of $W_j$'s that are adjacent to $W_i$ for each $i.$

Since $Q_{n}$ is a regular and connected graph with even degree $n,$ there is a directed Eulerian circuit in $Q_{n}$ in which each of in-degree and out-degree of each vertex is $\frac{n}{2}.$ In a $C_k$-decomposition of each $W_i,$ adjoin each vertex of each cycle to exactly one vertex of the nearest copy $W_j$ of $W_i$ in $Q_{2n},$ if there is a directed edge in the directed Eulerian circuit from the vertex $i$ to the vertex $j.$ From a $C_k$-decomposition of each $W_i$'s, we get edge-disjoint copies of $L_{2k}.$ This completes the proof.
\end{proof}

We need concepts of even and odd parity vertex in the proof of the following lemma. A vertex $v = x_1 x_2 \cdots x_n$ of $Q_n$ is said to be a vertex with \textit{even (odd) parity} if there are even (odd) number of $x_i$'s are $1$ in $v.$ Let $X$ and $Y$ be subsets of vertex set of $Q_n$ containing vertices with even parity and odd parity, respectively and $X \cup Y = V(Q_n).$ Then $(X,Y)$ is a bipartition of the bipartite graph $Q_n.$

\begin{lemma} \label{3n}
If $Q_{n}$ has a $C_k$-decomposition, then $Q_{3n}$ has an $L_{2k}$-decomposition.
\end{lemma}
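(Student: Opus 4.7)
The plan is to decompose $Q_{3n}$ using the factorisation $Q_{3n}=Q_{2n}\Box Q_n$ and to exploit the bipartition $(X,Y)$ of $V(Q_n)$ into even- and odd-parity vertices. Let $B_c$ denote the copy of $Q_{2n}$ in $Q_{3n}$ sitting over a vertex $c\in V(Q_n)$, and call an edge joining two different $B_c$'s a \emph{cross edge}; since $Q_n$ is bipartite, every cross edge has one endpoint in an $X$-copy and the other in a $Y$-copy.

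I would handle the two sides asymmetrically. For each $c\in X$, Lemma \ref{2n} applied to $B_c\cong Q_{2n}$ (legitimate by the hypothesis that $Q_n$ has a $C_k$-decomposition) supplies an $L_{2k}$-decomposition of $B_c$ using only edges internal to $B_c$. For each $c\in Y$, I would instead take a $C_k$-decomposition of $B_c\cong Q_{2n}$ (which exists by Corollary \ref{C} because the hypothesis forces $k$ to be an appropriate power of $2$); since $B_c$ is $2n$-regular, exactly $n$ of these cycles pass through each vertex. The key geometric observation is that at every vertex $(v,c)$ with $c\in Y$, all $n$ neighbours of $c$ in $Q_n$ lie in $X$, so $(v,c)$ has precisely $n$ cross edges going to $X$-copies -- matching the number of cycles of $B_c$ through it. Fixing an arbitrary bijection at each such vertex between the $n$ cycles through it and its $n$ cross edges, I attach the paired cross edge as the pendant of each cycle at that endpoint, turning every $C_k$ in a $Y$-copy into an $L_{2k}$.

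A routine verification then shows that this collection partitions $E(Q_{3n})$: edges internal to an $X$-copy (respectively $Y$-copy) are used exactly once, inside that copy's $L_{2k}$-decomposition (respectively as cycle edges of the promoted sunlets), and every cross edge is claimed exactly once, namely as the pendant chosen at its unique $Y$-endpoint, with no competing claim from the $X$-endpoint since the $X$-copy decompositions never touch cross edges. The main obstacle is that a naive Eulerian-orientation argument in the style of Lemma \ref{2n} cannot succeed here: the two factors of $Q_{3n}=Q_{2n}\Box Q_n$ have unequal degrees, so an Eulerian orientation of the outer $Q_n$ would give out-degree $n/2$ at each vertex while each vertex of $B_c$ demands $n$ pendants. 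The parity bipartition of $Q_n$ is precisely what circumvents this mismatch, by forcing the pendant ``direction'' always to run from $Y$ to $X$ and thereby making the pendant count come out exactly right.
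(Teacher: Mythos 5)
Your proof is correct and follows essentially the same route as the paper: split the $Q_{2n}$-copies of $Q_{3n}=Q_{2n}\Box Q_{n}$ by the parity bipartition of $Q_{n}$, give one side the internal $L_{2k}$-decomposition from Lemma \ref{2n}, and give the other side a $C_k$-decomposition whose $n$ cycles through each vertex are matched bijectively to the $n$ cross edges leading to the opposite side (the paper orients the pendants from even to odd parity where you orient them from odd to even, which is immaterial). The only cosmetic difference is that you justify the $C_k$-decomposition of $Q_{2n}$ via Corollary \ref{C}, whereas it follows more directly from Lemma \ref{def} applied to $Q_{2n}=Q_{n}\Box Q_{n}$.
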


\begin{proof}
We can write, $Q_{3n} = Q_{2n} \Box Q_{n}.$ Let $W_0, W_1, \cdots, W_{2^{n}-1}$ be copies of $Q_{2n}$ in $Q_{3n}$ replaced by vertices of $Q_{n}.$ Let $D$ be a digraph obtained from $Q_{n}$ such that out-degree of each vertex with even parity is $n$ and odd parity is $0.$ By Lemma \ref{2n}, each $W_j$ corresponding to vertex of $Q_{n}$ with odd parity, has an $L_{2k}$-decomposition. Consider a $C_k$-decomposition of $W_j$ corresponding to vertex of $Q_{n}$ with even parity. Note that in the $C_k$-decomposition of $W_j,$ from each vertex exactly $n$ edge-disjoint cycles passes. By adjoining exactly one vertex to each cycle in $W_j$ corresponding to vertex of $Q_{n}$ with even parity, we get copies of $L_{2k}$ corresponding to each $C_k$ in the $C_k$-decomposition of $W_j.$ This completes the proof.
\end{proof}

Now, we have the following result.
\begin{theorem} \label{m}
If $Q_{n}$ has a $C_k$-decomposition, then $Q_{mn}$ has an $L_{2k}$-decomposition for $m \geq 2.$
\end{theorem}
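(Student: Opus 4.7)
The plan is to prove the theorem by strong induction on $m$, using Lemmas \ref{2n} and \ref{3n} as base cases and Lemma \ref{def} to carry out the inductive step.

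First I would dispatch the base cases: for $m=2$, the result is precisely Lemma \ref{2n}, and for $m=3$, it is precisely Lemma \ref{3n}. Together these handle $m \in \{2,3\}$ with no extra work.

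For the inductive step, suppose $m \geq 4$ and assume the result holds for all values between $2$ and $m-1$. The key observation is that $Q_{mn}$ factors as a Cartesian product $Q_{mn} = Q_{(m-2)n} \,\Box\, Q_{2n}$. Since $m-2 \geq 2$, the inductive hypothesis gives an $L_{2k}$-decomposition of $Q_{(m-2)n}$. The second factor $Q_{2n}$ has an $L_{2k}$-decomposition by Lemma \ref{2n} applied to the hypothesis that $Q_n$ has a $C_k$-decomposition. Lemma \ref{def} then immediately yields an $L_{2k}$-decomposition of $Q_{mn}$, completing the induction.

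The reason for splitting off \emph{two} factors rather than one is that Lemma \ref{def} demands both factors already possess an $L_{2k}$-decomposition; one cannot simply write $Q_{mn} = Q_{(m-1)n} \,\Box\, Q_n$ because $Q_n$ is only assumed to admit a $C_k$-decomposition, not an $L_{2k}$-decomposition. Having both $m=2$ and $m=3$ as base cases is therefore essential, since every integer $m \geq 4$ can be written as $(m-2)+2$ with $m-2 \geq 2$, and the parities $m \equiv 0$ and $m \equiv 1 \pmod 2$ are both reached inductively from the two base cases. There is no real obstacle here beyond recognizing that the two already-established lemmas, together with the Cartesian-product lemma, cover all $m \geq 2$ via this two-step induction.
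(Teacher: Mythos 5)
Your proof is correct, and it rests on exactly the same three ingredients as the paper's: Lemma \ref{2n} for $m=2$, Lemma \ref{3n} for $m=3$, and Lemma \ref{def} to combine Cartesian factors. The difference is purely in how the integer $m$ is broken down. The paper argues by cases on the arithmetic of $m$: it writes $Q_{mn}$ as a product of $\tfrac{m}{2}$ copies of $Q_{2n}$ when $2 \mid m$, as a product of $\tfrac{m}{3}$ copies of $Q_{3n}$ when $3 \mid m$, handles $m=5$ and $m=7$ by hand via $Q_{5n}=Q_{2n}\Box Q_{3n}$ and $Q_{7n}=Q_{4n}\Box Q_{3n}$, and then treats the remaining $m\geq 11$ by splitting into the residue classes $6q+5$ and $6q+1$. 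Your strong induction with the single uniform step $Q_{mn}=Q_{(m-2)n}\Box Q_{2n}$ for $m\geq 4$ collapses all of that case analysis into one line, since $m-2\geq 2$ always lands back in the range covered by the induction hypothesis and $Q_{2n}$ always has an $L_{2k}$-decomposition by Lemma \ref{2n}. Your version is shorter and arguably cleaner; the paper's version makes the role of the two ``building blocks'' $Q_{2n}$ and $Q_{3n}$ more explicit but at the cost of a residue-class analysis that your induction shows to be unnecessary. You are also right to flag that the naive split $Q_{mn}=Q_{(m-1)n}\Box Q_n$ fails because $Q_n$ is only assumed to have a $C_k$-decomposition, which is precisely why both base cases $m=2$ and $m=3$ are needed.
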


\begin{proof}
If $m$ is multiple of $2,$ the result holds by Lemmas \ref{def} and \ref{2n} as $Q_{mn}$ is the Cartesian product of $\frac{m}{2}$ copies of $Q_{2n}.$ Similarly, the result holds by Lemmas \ref{def} and \ref{3n} if $m$ is multiple of $3$ as $Q_{mn}$ is the Cartesian product of $\frac{m}{3}$ copies of $Q_{2n}.$ For $m=5$ and $7,$ we can write $Q_{mn}$ as  $Q_{5n}= Q_{2n} \Box Q_{3n}$ and $Q_{7n}= Q_{4n} \Box Q_{3n},$ respectively. Thus the result holds by Lemmas \ref{def}, \ref{2n} and \ref{3n} for $m=5,7.$ It follows that the result holds for $m$ with $2 \leq m \leq 10.$ Suppose that $m \geq 11,$ and $m$ is not multiple of $2$ and $3.$ Then either $m = 6q+5$ for some $q \geq 1$ or $m=6q+1$ for some $q \geq 2.$ Suppose $m = 6q+5$ for $q \geq 1.$ Then we can write $Q_{mn}$ as  $Q_{mn}= Q_{6qn} \Box Q_{5n}.$ Suppose $m = 6q+1$ for $q \geq 2.$ Then we can write $Q_{mn}$ as  $Q_{mn}= Q_{6(q-1)n} \Box Q_{7n}.$ Note that for any $r \geq 1,$ $Q_{6rn}$ has an $L_{2k}$-decomposition by both Lemmas \ref{2n} and \ref{3n}. Thus by Lemma \ref{def}, $Q_{mn}$ has an $L_{2k}$-decomposition.
\end{proof}

As a consequence of Theorem \ref{m}, we have the following result.
\begin{corollary}
Let $m \geq 2$ be an integer and $n \geq 4$ be an even integer.
\begin{enumerate}
\item $Q_{mn}$ has an $L_{2^{t+1}}$-decomposition for $2\leq t \leq n-1.$
\item $Q_{mn}$ has an $L_{2n}$-decomposition.
\item $Q_{mn}$ has an $L_{4n}$-decomposition.
\item $Q_{mn}$ has an $L_{8n}$-decomposition.
\item $Q_{mn}$ has an $L_{n 2^{k+1}}$-decomposition for $2n \leq n 2^k \leq \frac{2^n}{n}.$
\end{enumerate}
\end{corollary}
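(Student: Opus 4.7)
The plan is to prove the entire corollary by specialising Theorem~\ref{m} appropriately and invoking Corollary~\ref{C} to supply the required cycle decomposition of $Q_n$ in each case. Recall that Theorem~\ref{m} converts a $C_k$-decomposition of $Q_n$ into an $L_{2k}$-decomposition of $Q_{mn}$ for $m \geq 2$, and Corollary~\ref{C} furnishes a $C_s$-decomposition of $Q_n$ exactly when $s$ is a power of $2$ with $4 \leq s \leq 2^n$. So the whole game is to pick, for each item, the correct cycle length $s$ and check that it lies in the admissible range.

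For item~(1), I take $s = 2^t$. The range $2 \leq t \leq n-1$ gives $4 \leq 2^t \leq 2^{n-1}$, so Corollary~\ref{C} produces a $C_{2^t}$-decomposition of $Q_n$, whence Theorem~\ref{m} yields the $L_{2^{t+1}}$-decomposition of $Q_{mn}$. Items~(2)--(4) then fall out as special cases of item~(1) once $n = 2^r$ is itself a power of $2$: the sunlets $L_{2n}, L_{4n}, L_{8n}$ equal $L_{2^{r+1}}, L_{2^{r+2}}, L_{2^{r+3}}$, which correspond to item~(1) with $t = r,\, r+1,\, r+2$ respectively, and the bound $t \leq n-1$ is immediate from $n \geq 4$. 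For item~(5), setting $s = n 2^{j}$ (I rename the running exponent to avoid a clash with the $k$ in Theorem~\ref{m}), the stated condition $2n \leq n 2^{j} \leq 2^n/n$ together with $n$ a power of $2$ guarantees that $n 2^{j}$ is a power of $2$ in $[4, 2^n]$; Corollary~\ref{C} then delivers the $C_{n 2^{j}}$-decomposition of $Q_n$, and Theorem~\ref{m} produces the required $L_{n 2^{j+1}}$-decomposition of $Q_{mn}$.

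The only real point of care is parameter bookkeeping; no new combinatorial ingredient is needed beyond Theorem~\ref{m} and Corollary~\ref{C}. The most subtle point, rather than any obstacle in the construction, is that items~(2)--(5) implicitly require $n$ (and hence the target cycle length $n,\,2n,\,4n,\,n 2^j$) to be a power of $2$, since Corollary~\ref{C} only produces $C_s$-decompositions of $Q_n$ when $s$ itself is a power of $2$. With that understood, each of the five items is a one-line consequence of combining the two preceding results.
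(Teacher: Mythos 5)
There is a genuine gap in items (2)--(5). The corollary is stated for \emph{every} even integer $n \geq 4$, not only for $n$ a power of $2$; your remark that these items ``implicitly require $n$ to be a power of $2$'' silently weakens the statement rather than proving it. For, say, $n = 6$, item (2) asserts that $Q_{6m}$ has an $L_{12}$-decomposition, which via Theorem \ref{m} needs a $C_6$-decomposition of $Q_6$. Corollary \ref{C} can never supply this, since it only produces $C_s$-decompositions with $s$ a power of $2$, and $6$, $12$, $24$, and $6\cdot 2^j$ are not powers of $2$. The same problem occurs for every even $n$ that is not a power of $2$ in each of items (2)--(5). The paper closes this gap by importing four further cycle-decomposition results from the literature: Ramras for $C_n$, Mollard--Ramras for $C_{2n}$, Tapadia--Borse--Waphare for $C_{4n}$, and Axenovich--Offner--Tompkins for $C_{n2^k}$ with $2n \leq n2^k \leq 2^n/n$; these hold for all even $n \geq 4$ and are then fed into Theorem \ref{m} exactly as you do. So the overall strategy (find the right $C_k$-decomposition of $Q_n$, then apply Theorem \ref{m}) is correct, and your treatment of item (1) matches the paper's, but items (2)--(5) need external cycle-decomposition theorems that Corollary \ref{C} cannot replace.
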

\begin{proof} 
We have following $C_k$-decompositions of $Q_n$ for an even integer $n \geq 4.$

\begin{enumerate}
    \item  Zanati and Eynden \cite{z} proved that $Q_n$ has a $C_{2^t}$-decomposition for $2\leq t \leq n-1.$
    \item Ramras \cite{r} proved that $Q_n$ has a $C_n$-decomposition.
    \item Mollard and Ramras \cite{mr} proved that $Q_n$ has a $C_{2n}$-decomposition.
    \item Tapadia, Borse and Waphare \cite{t} obtained that $Q_n$ has a $C_{4n}$-decomposition.
    \item Axenovich, Offner and Tompkins \cite{ax} established that $Q_n$ has a $C_{n 2^k}$-decomposition for $2n \leq n 2^k \leq \frac{2^n}{n}.$
\end{enumerate}

By applying Theorem \ref{m} to each of above $C_k$-decompositions of $Q_n,$ we get the desired $L_{2k}$-decomposition of $Q_{mn}.$
\end{proof}

\end{document}